\newtheorem{theorem}{Theorem}[section]
\newtheorem{lemma}{Lemma}[section]
\newcommand{\norm}[1]{\left\|#1\right\|}
\newtheorem{definition}[theorem]{Definition}
\newtheorem{assumption}[theorem]{Assumption}
\newtheorem{remark}[theorem]{Remark}
\newcommand{\OMIT}[1]{}
\newif\ifpdf
\title{\LARGE \bf Distributed Convex Optimization of Time-Varying Cost Functions with Swarm Tracking Behavior for Continuous-time Dynamics}
\author{Salar Rahili, Wei Ren, Sheida Ghapani
\thanks{Salar Rahili and Wei Ren are with the Department of Electrical Engineering, University of California, Riverside, CA, USA.
       {Email: srahi001@ucr.edu, ren@ee.ucr.edu, sghap001@ucr.edu}}
}
\begin{document}
%

\maketitle

\begin{abstract}
In this paper, a distributed convex optimization problem with swarm tracking behavior is studied for continuous-time multi-agent systems. The agents' task is to drive their center to track an optimal trajectory which minimizes the sum of local time-varying cost functions through local interaction, while maintaining connectivity and avoiding inter-agent collision. Each local cost function is only known to an individual agent and the team's optimal solution is time-varying. Here two cases are considered, single-integrator dynamics and double-integrator dynamics. For each case, a distributed convex optimization algorithm with swarm tracking behavior is proposed where each agent relies only on its own position and the relative positions (and velocities in the double-integrator case) between itself and its neighbors. It is shown that the center of the agents tracks the optimal trajectory, the the connectivity of the agents will be maintained and inter-agent collision is avoided. Finally, numerical examples are included for illustration.
\end{abstract}


\IEEEpeerreviewmaketitle

\section{Introduction} \label{sec:introduction}
Flocking or swarm tracking of a leader has received significant attention in the literature \cite{saber,su,caoren12}. The goal is that a group of agents tracks a leader only with local interaction while maintaining connectivity and avoiding inter-agent collision. A swarm tracking algorithm is studied in \cite{saber}, where it is assumed that the leader has a constant velocity and is a neighbor of all followers. The result in \cite{saber} has been extended in \cite{su} for the leader with a time-varying velocity, where it requires the leader to be a neighbor of all followers too. In \cite{caoren12}, a swarm tracking algorithm via a variable structure approach is introduced, where the leader is a neighbor of only a subset of the followers. In the aforementioned studies, the leader plans the trajectory for the team and no optimal criterion is defined. However, in many multi-agents applications it is relevant for the agents to cooperatively optimize a certain criterion.

In the distributed convex optimization literature, there exists a significant interest in a class of problems, where the goal is to minimize the sum of local cost functions, each of which is known to only an individual agent. Recently some remarkable results based on the combination of consensus and subgradient algorithms have been published \cite{D3-08, D4-10,D8-11}. For example, this combination is used in \cite{D3-08} for solving the coupled optimization problem with a fixed undirected topology. A projected subgradient algorithm is proposed in \cite{D4-10}, where each agent is required to lie in its own convex set. It is shown that all agents can reach an optimal point in the intersection of all agents' convex sets for a time-varying communication graph with doubly stochastic edge weight matrices.

However, all the aforementioned works are based on discrete-time algorithms. Recently, some new research is conducted on distributed optimization problems for multi-agent systems with continuous-time dynamics. Such a scheme has applications in motion coordination of multi-agent systems. For example, multiple physical vehicles modelled by continuous-time dynamics might need to rendezvous at a team optimal location. In \cite{C3-11}, a generalized class of zero-gradient sum controllers for twice differentiable strongly convex functions with an undirected graph is introduced. In \cite{C4-13}, a continuous version of \cite{D4-10} is studied, where it is assumed that each agent is aware of the convex solution set of its own cost function and the intersection of all these sets is nonempty. In \cite{C7-12}, the convergence rate and error bounds of a continuous-time distributed optimization algorithm has been derived. In \cite{C6-11}, an approach is given to address the problem of distributed convex optimization with equality and inequality constraints. A proportional-integral algorithm is introduced in \cite{C5, C2-12, C2-14}, where \cite{C2-12} considers strongly connected weight balanced directed graphs and \cite{C2-14} extends these results using a discrete-time communications scheme. A distributed optimization problem for single-integrator agents is studied in \cite{C1} with the adaptivity and finite-time convergence properties.

Having time-invariant cost functions is a common assumption in the literature. However, in many applications the local cost functions are time varying, reflecting the fact that the optimal point could be changing over time and creates a trajectory. In addition, in continuous-time optimization problems, the agents are usually assumed to have single-integrator dynamics. However, a broad class of vehicles requires double-integrator dynamic models. In our early work \cite{acc}, a preliminary attempt for time-varying cost function is made. However, in all articles on distributed optimization mentioned above, the agents will eventually approach a common optimal point. While the algorithms can be applied to rendezvous problems, they are not applicable to more complicated swarm tracking problems.

In this paper, we study a distributed convex optimization problem with swarm tracking behavior for continuous-time multi-agent systems. There exist significant challenges in the study due to the coexistence of nonlinear swarm behavior, optimization objectives, and time-varying cost functions under the constraints of local information and local interaction.  The center of the agents will track an optimal trajectory which minimizes the sum of local time-varying cost functions through local interaction. The agents will maintain connectivity and avoid inter-agent collision. Each local cost function is only known to an individual agent and the team's optimal solution is time-varying. Both cases of single-integrator
dynamics and double-integrator dynamics will be considered.

The remainder of this paper is organized as follows: In Section \ref{sec:notation}, the notation and preliminaries used throughout this paper are introduced. In Section \ref{sec:single} and \ref{sec:double}, two distributed convex optimization algorithms with swarm tracking behavior and time-varying cost functions for respectively single-integrator and double-integrator dynamics are designed. Finally in Section \ref{sec:sim}, numerical examples are given for illustration.

\section{notations and preliminaries} \label{sec:notation}
The following notations are adopted throughout this paper. $R^+$ denotes the positive real numbers. ${\mathcal I}$ denotes the index set $\{1,...,N\}$; The transpose of matrix $A$ and vector $x$ are shown as $A^T$ and $x^T$, respectively. $||x||_p$ denotes the p-norm of the vector $x$. Let $\textbf{1}_n$ and $\textbf{0}_n$ denote the column vectors of $n$ ones and zeros, respectively and $I_n$ denote the $n \times n$ identity matrix. For matrix $A$ and $B$, the Kronecker product is denoted by $A \otimes B$. The gradient and Hessian of function $f$ are denoted by $\nabla f$ and $H$, respectively. The matrix inequality $A>(\geq) B$ means that $A-B$ is positive (semi-)definite.

Let a triplet ${\mathcal G}=({\mathcal V},{\mathcal E},{\mathcal A})$ be an undirected graph, where ${\mathcal V}=\{1,...,N\}$ is the node set and ${\mathcal E} \subseteq {\mathcal V} \times {\mathcal V}$ is the edge set, and ${\mathcal A}=[a_{ij}] \in R^{N \times N}$ is a weighted adjacency matrix. An edge between agents $i$ and $j$, denoted by $(i,j) \in {\mathcal E}$, means that they can obtain information from each other. The weighted adjacency matrix ${\mathcal A}$ is defined as $a_{ii}=0$, and $a_{ij}=a_{ji}>0$ if $(i,j) \in {\mathcal E}$ and $a_{ij}=0$, otherwise. The set of neighbors of agent $i$ is denoted by $N_i=\{j \in {\mathcal V}: (j,i) \in {\mathcal E} \}$. A sequence of edges of the form $(i,j),(j,k),...,$ where $i,j,k \in {\mathcal V},$ is called a path. The graph ${\mathcal G}$ is connected, if there is a path from every node to every other node. The incidence matrix associated with the graph ${\mathcal G}$ is represented as $D$. Let the Laplacian matrix $L=[l_{ij}] \in R^{N \times N}$ associated with the graph ${\mathcal G}$ be defined as $l_{ii} =\sum_{j=1,j \neq i}^N a_{ij}$ and $l_{ij}=-a_{ij}$ for $i \neq j$. The Laplacian matrix $L$ is symmetric positive semidefinite. We denote the eigenvalues of $L$ by $\lambda_1, ...,\lambda_N$. The undirected graph ${\mathcal G}$ is connected if and only if $L$ has a simple zero eigenvalue with the corresponding eigenvector $\textbf{1}_N$ and all other eigenvalues are positive \cite{graphtheory}. When the graph ${\mathcal G}$ is connected, we order the eigenvalues of $L$ as $\lambda_1=0 < \lambda_2 \leq ...\leq \lambda_N$. Note that $L = D D^T$.
\begin{lemma}\cite{boydopt} \label{lem-intro-gradzero}
Let $f(x): R^{m} \rightarrow R$ be a continuously differentiable convex function. $f(x)$ is minimized if and only if $\nabla f=0$.
\end{lemma}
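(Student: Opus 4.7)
The plan is to prove both directions of the equivalence, noting that the forward direction holds for any differentiable function while the reverse direction crucially uses convexity.

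For the necessity direction ($\Rightarrow$), suppose $x^*$ minimizes $f$. I would argue by contradiction: if $\nabla f(x^*) \neq 0$, then picking the descent direction $d = -\nabla f(x^*)$ and writing the first-order Taylor expansion $f(x^* + t d) = f(x^*) + t \, \nabla f(x^*)^T d + o(t) = f(x^*) - t \|\nabla f(x^*)\|_2^2 + o(t)$, we would have $f(x^* + t d) < f(x^*)$ for all sufficiently small $t > 0$, contradicting the minimality of $x^*$. Hence $\nabla f(x^*) = 0$. This step uses only differentiability, not convexity.

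For the sufficiency direction ($\Leftarrow$), this is where convexity does the real work. I would invoke the first-order characterization of differentiable convex functions, namely that for all $x, y \in R^m$,
\begin{equation*}
f(y) \geq f(x) + \nabla f(x)^T (y - x).
\end{equation*}
Setting $x = x^*$ with $\nabla f(x^*) = 0$ immediately gives $f(y) \geq f(x^*)$ for every $y \in R^m$, so $x^*$ is a global minimizer. If the first-order characterization is not taken as a given, a short derivation is to fix $x, y$, define $\varphi(t) = f(x + t(y - x))$ for $t \in [0,1]$, observe that convexity of $f$ makes $\varphi$ a convex scalar function, and then use $\varphi(1) \geq \varphi(0) + \varphi'(0)$ together with the chain rule $\varphi'(0) = \nabla f(x)^T (y-x)$.

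The only even mildly nontrivial step is the sufficiency direction, and its difficulty is entirely absorbed into the first-order convexity inequality; everything else is a one-line Taylor-expansion argument. Since this lemma is a standard result cited from a convex optimization reference, I would present the proof compactly, emphasizing the role of convexity in upgrading a local (stationary) condition into a global (minimum) condition.
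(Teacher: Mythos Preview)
Your proof is correct and is the standard argument for this classical result. Note, however, that the paper does not actually supply a proof of this lemma: it is stated with a citation to \cite{boydopt} and used as a black box, so there is no ``paper's own proof'' to compare against. Your argument is precisely the one found in that reference, so nothing is missing.
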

\begin{definition} \cite{boydopt}
$f(x)$ is m-strongly convex if and only if 
\begin{equation*}
(y-x)(\nabla f(y)-\nabla f(x)) \geq m \norm{y-x}^2,
\end{equation*} 
for $ m>0,\forall x,y \in R^{n}, x\neq y$. If $f(x)$ is m-strongly convex and twice differentiable on $x$, then $H(x) \geq mI_n$.
\end{definition}
\begin{lemma} \cite{eig1} \label{eigvalue}
The second smallest eigenvalue $\lambda_2$ of the Laplacian matrix $L$ associated with the undirected connected graph ${\mathcal G}$ satisfies $\lambda_2 = \min_{x^T \textbf{1}_N =0, x \neq \textbf{0}_N} \frac{x^T L x}{x^T x}$.
\end{lemma}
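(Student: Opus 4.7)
The plan is to recognize this as a standard instance of the Courant--Fischer (Rayleigh quotient) characterization applied to the Laplacian, and to exploit the two features already guaranteed by the hypotheses: $L$ is symmetric positive semidefinite, and $\lambda_1=0$ is a simple eigenvalue with eigenvector $\mathbf{1}_N$.

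First I would invoke the spectral theorem for the symmetric matrix $L$ to obtain an orthonormal eigenbasis $v_1,v_2,\ldots,v_N$ of $\mathbb{R}^N$ with $L v_i = \lambda_i v_i$, ordered so that $0=\lambda_1 < \lambda_2 \leq \cdots \leq \lambda_N$. Because the graph is connected, the zero eigenvalue is simple and its eigenspace is spanned by $\mathbf{1}_N$, so I may take $v_1 = \mathbf{1}_N/\sqrt{N}$. The constraint $x^T\mathbf{1}_N=0$ is then precisely the statement that $x$ lies in the orthogonal complement of $v_1$, i.e.\ in $\mathrm{span}\{v_2,\ldots,v_N\}$.

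Next I would expand any such nonzero $x$ in the basis as $x=\sum_{i=2}^N c_i v_i$. Using orthonormality, a direct computation gives $x^T x = \sum_{i=2}^N c_i^2$ and $x^T L x = \sum_{i=2}^N \lambda_i c_i^2$. Since $\lambda_i \geq \lambda_2$ for all $i\geq 2$, term-by-term comparison yields
\begin{equation*}
\frac{x^T L x}{x^T x} \;=\; \frac{\sum_{i=2}^N \lambda_i c_i^2}{\sum_{i=2}^N c_i^2} \;\geq\; \lambda_2,
\end{equation*}
so $\lambda_2$ is a lower bound for the Rayleigh quotient over the admissible set. To show the bound is attained I would exhibit the minimizer $x = v_2$, which satisfies $v_2^T \mathbf{1}_N = 0$ (orthogonality to $v_1$) and produces quotient value exactly $\lambda_2$.

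Essentially no step is a real obstacle here; the only subtlety worth flagging is the need to use connectedness to ensure that $\lambda_1=0$ is simple and that the constraint $x^T\mathbf{1}_N=0$ indeed corresponds to the orthogonal complement of the first eigenvector (otherwise the characterization would yield the lowest eigenvalue of $L$ restricted to $\mathbf{1}_N^\perp$, not $\lambda_2$ in the original spectrum). Once that observation is in place, the result follows directly from the spectral decomposition.
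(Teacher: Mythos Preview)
Your argument is correct and is the standard Courant--Fischer/Rayleigh-quotient proof of this fact. The paper itself does not prove this lemma but simply cites it from the reference \cite{eig1}, so there is no paper proof to compare against; your spectral-decomposition approach is exactly the expected justification.
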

\begin{lemma} \cite{boyd} \label{schur}
The symmetric matrix
\begin{equation} \nonumber
\left( {\begin{array}{cc} Q & S \\ S^T & R \\ \end{array} } \right)
\end{equation}
is positive definite if and only if one of the following conditions holds:
(i) $Q> 0, R - S^T Q^{-1} S > 0$; or (ii) $R >0, Q - S R^{-1} S^T > 0.$
\end{lemma}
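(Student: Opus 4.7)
The plan is to prove the Schur complement characterization by constructing an explicit block triangular factorization that realizes a congruence transformation, then invoking the fact that congruence preserves positive definiteness.

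For condition (i), I would start by temporarily assuming $Q$ is invertible and verify the block identity
\begin{equation*}
\begin{pmatrix} Q & S \\ S^T & R \end{pmatrix}
=
\begin{pmatrix} I & 0 \\ S^T Q^{-1} & I \end{pmatrix}
\begin{pmatrix} Q & 0 \\ 0 & R - S^T Q^{-1} S \end{pmatrix}
\begin{pmatrix} I & Q^{-1} S \\ 0 & I \end{pmatrix}
\end{equation*}
by direct block multiplication. The outer factors are unit triangular, hence nonsingular, and they are transposes of one another, so the decomposition expresses the left-hand side as a congruence of the middle block-diagonal factor. Since congruence preserves the inertia, the full matrix is positive definite if and only if the middle block-diagonal factor is positive definite, which happens precisely when both $Q > 0$ and $R - S^T Q^{-1} S > 0$.

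To finish the ``only if'' direction I need to justify the invertibility of $Q$ from positive definiteness of the full matrix. This comes from restricting the associated quadratic form: for any $x \neq 0$, evaluating at $(x^T, 0^T)^T$ gives $x^T Q x > 0$, so $Q > 0$ and in particular $Q^{-1}$ exists, which legitimizes writing the factorization in the first place. Combined with the congruence equivalence above, this closes the biconditional for (i).

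For condition (ii) the cleanest route is to conjugate by the block permutation
\begin{equation*}
P = \begin{pmatrix} 0 & I \\ I & 0 \end{pmatrix},
\end{equation*}
which is orthogonal and therefore preserves positive definiteness while swapping the diagonal blocks. Applying the already-established condition (i) to $P^T M P$ yields condition (ii) with the roles of $Q$ and $R$ interchanged. The main subtlety in the whole argument is not algebraic but logical: one must be careful not to write down $Q^{-1}$ (or $R^{-1}$) before having established positive definiteness of the pivot block, and this is handled by the restriction argument in the paragraph above before the factorization is invoked.
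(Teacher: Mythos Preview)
Your proof is correct and is the standard argument via congruence with a unit block-triangular matrix, including the careful handling of when $Q^{-1}$ is legitimate to write. Note, however, that the paper does not supply its own proof of this lemma at all; it simply cites the result from \cite{boyd}, so there is nothing in the paper to compare your argument against.
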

\begin{assumption} \label{as1}
The function $f_0(x,t)$ is m-strongly convex and continuously twice differentiable with respect to $x,\ \forall x,t$.
\end{assumption}

\section{Time-Varying Convex Optimization with Swarm Tracking For Single-Integrator Dynamics} \label{sec:single}

Consider a multi-agent system consisting of $N$ physical agents with an interaction topology described by the undirected graph ${\mathcal G}$. Suppose that the agents satisfy the continuous-time single-integrator dynamics
\begin{equation} \label{single}
 \dot{x}_i(t) =u_i(t)
\end{equation}
where $x_i(t) \in R^{m}$ is the position of agent $i$, and $u_i(t) \in R^{m}$ is the control input of agent $i$. As $x_i(t)$ and $u_i(t)$ are functions of time, we will write them as $x_i$ and $u_i$ for ease of notation. A time-varying local cost function $f_i : R^{m}\times R^{+} \rightarrow R$ is assigned to agent $i\in {\mathcal I},$ which is known to only agent $i$. The team cost function $f : R^{m}\times R^{+} \rightarrow R$ is denoted by \begin{equation} \label{teamcost}
 f(x,t)\triangleq \sum_{i=1}^N f_i(x,t).
\end{equation}Our objective is to design $u_i$ for \eqref{single} using only local information and local interaction with neighbors such that the center of all agents tracks the optimal state $x^*(t)$, and the agents maintain connectivity while avoiding inter-agent collision. Here $x^*(t)$ is the minimizer of the time-varying convex optimization problem
\begin{equation} \label{xstar1}
x^*(t)= \text{arg} \min_{x \in R^m} f(x,t).
\end{equation} 
The problem defined in \eqref{xstar1} is equivalent to
\begin{equation} \label{costdis}
\min_{x_i(t)} \sum_{i=1}^N f_i(x_i,t) \ \text{subject to} \ x_i=x_j.
\end{equation} 
Intuitively, the problem is deformed as a consensus problem and a minimization problem on the team cost function \eqref{teamcost}.

%
In our proposed algorithm, each agent has access to only its own position and the relative positions between itself and its neighbors. To solve this problem, we propose the algorithm 
\begin{equation} \label{usingleswarm}
\begin{split}
u_i(t) =& -\alpha \sum_{j \in N_i} \frac{\partial V_{ij}}{\partial x_i}- \beta \text{sgn}( \sum_{j \in N_i} \frac{\partial V_{ij}}{\partial x_i})+\phi_i, 
\end{split}
\end{equation}
where
\begin{equation*}
\phi_i \triangleq -  H^{-1}_i(x_i,t)\big(\tau \nabla f_i(x_i,t)+ \frac{\partial \nabla f_i(x_i,t)}{\partial t}\big),
\end{equation*}
$V_{ij}$ is a potential function between agents $i$ and $j$ to be designed, $\alpha$ is non-negative, $\beta$ is positive, and sgn($\cdot$) is the signum function defined componentwise. It is worth mentioning that $\phi_i$ depends on only agent $i$'s position. We assume that each agent has a radius of communication/sensing $R$, where if $\norm{x_i-x_j}<R$ agent $i$ and $j$ become neighbors. Our proposed algorithm guarantees connectivity maintenance which means that if the graph ${\mathcal G}(0)$ is connected, then for all $t$, ${\mathcal G}(t)$ will remain connected. Before our main result, we need to define the potential function $V_{ij}$.

\begin{definition} \label{defswarm} \cite{caoren12}
The potential function $V_{ij}$ is a differentiable nonnegative function of $\norm{x_i-x_j}$ which satisfy the following conditions
\begin{itemize}
\item[1)] $V_{ij}=V_{ji}$ has a unique minimum in $\norm{x_i-x_j}=d_{ij}$, where $d_{ij}$ is a desired distance between agents $i$ and $j$ and $R> \max_{i,j} d_{ij}$,
\item[2)] $V_{ij} \rightarrow \infty$ if $\norm{x_i-x_j} \rightarrow 0$.
\item [3)] $V_{ii}=c$, where $c$ is a constant. \vspace*{0.1cm}
\item [4)] \small $\begin{cases} \frac{\partial V_{ij}}{\partial(\norm{x_i-x_j})} =0 &  \norm{x_i(0)-x_j(0)}\geq R,\norm{x_i-x_j}\geq R,
\\ \frac{\partial V_{ij}}{\partial(\norm{x_i-x_j})} \to \infty & \norm{x_i(0)-x_j(0)}< R, \norm{x_i-x_j} \to R, 
\end{cases}$ \normalsize
\end{itemize}
\end{definition}\vspace*{0.1cm}

\begin{theorem} \label{theoremswarm}
Suppose that graph ${\mathcal G}(0)$ is connected, Assumption \ref{as1} holds for each agent's cost function $f_i(x_i(t),t),$ and the gradient of the cost functions can be written as $\nabla f_i(x_i,t)=\sigma x_i+g_i(t), \ \forall i \in {\mathcal I}$. If $\alpha \geq 0$, and $\beta \geq \norm{\phi_i}_1, \ \forall i \in {\mathcal I}$, for system \eqref{single} with algorithm \eqref{usingleswarm}, the center of the agents tracks the optimal trajectory while maintaining connectivity and avoiding inter-agent collision. 
\end{theorem}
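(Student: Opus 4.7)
The plan is to handle the three assertions in three stages, with a single Lyapunov argument taking care of connectivity and collision avoidance simultaneously, a short algebraic reduction producing the centroid dynamics, and a sliding-mode/Barbalat argument disposing of the remaining signum residual.

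\textbf{Stage 1 (connectivity and collision avoidance).} Take $W(t)\triangleq\tfrac12\sum_{i,j}V_{ij}$ and write $s_i\triangleq\sum_{j\in N_i}\partial V_{ij}/\partial x_i$. Since the graph is undirected and $V_{ij}=V_{ji}$ depends only on $\|x_i-x_j\|$, a short manipulation using $\nabla_{x_j}V_{ij}=-\nabla_{x_i}V_{ij}$ yields $\dot W=\sum_i s_i^T\dot x_i$. Substituting \eqref{usingleswarm}, together with $s_i^T\,\mathrm{sgn}(s_i)=\|s_i\|_1$ and the H\"older estimate $s_i^T\phi_i\le\|s_i\|_\infty\|\phi_i\|_1\le\beta\|s_i\|_\infty\le\beta\|s_i\|_1$ (which uses $\beta\ge\|\phi_i\|_1$), I obtain $\dot W\le-\alpha\sum_i\|s_i\|^2\le 0$. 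Combined with Definition~\ref{defswarm} - which makes $V_{ij}$ diverge as $\|x_i-x_j\|\to 0$ and, for initially connected pairs, as $\|x_i-x_j\|\to R$ - boundedness of $W$ precludes collisions and prevents the loss of any edge present at $t=0$, giving both collision avoidance and connectivity maintenance.

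\textbf{Stage 2 (centroid dynamics).} Summing \eqref{usingleswarm} over $i$, the $-\alpha s_i$ contributions telescope to zero because $\sum_i s_i=0$ on an undirected graph. The affine hypothesis $\nabla f_i(x,t)=\sigma x+g_i(t)$ gives $H_i=\sigma I_m$ and therefore $\phi_i=-\tau x_i-(\tau/\sigma)g_i(t)-(1/\sigma)\dot g_i(t)$. The team minimizer is read off from $\nabla f(x^*,t)=0$ as $x^*(t)=-(N\sigma)^{-1}\sum_i g_i(t)$, and a direct computation then yields $\tfrac1N\sum_i\phi_i=\tau(x^*-\bar x)+\dot x^*$. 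Setting $e\triangleq\bar x-x^*$ reduces the centroid equation to
\begin{equation*}
\dot e(t)=-\tau\, e(t)-\frac{\beta}{N}\sum_{i=1}^N\mathrm{sgn}(s_i(t)).
\end{equation*}

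\textbf{Stage 3 (tracking).} The signum residual is the main obstacle. I would first establish $s_i(t)\to 0$: when $\alpha>0$, Stage~1 gives $\int_0^\infty\|s_i\|^2\,dt<\infty$, and $\dot s_i$ is bounded because $\phi_i$ and $\mathrm{sgn}(\cdot)$ are bounded along trajectories kept inside the forward-invariant sublevel set of $W$, so Barbalat's lemma gives $s_i\to 0$; the corner case $\alpha=0$ is handled through the sharper estimate $\dot W\le-\beta\sum_i(\|s_i\|_1-\|s_i\|_\infty)$ combined with a LaSalle/Filippov argument. Once the trajectory is on the sliding manifold $\{s_i\equiv 0:i\in\mathcal I\}$, the discontinuous $\mathrm{sgn}(s_i)$ must be replaced by an equivalent Filippov selection $v_i\in[-1,1]^m$ that keeps $s_i\equiv 0$; the natural rigid-motion choice $\dot x_i\equiv\bar\phi$ satisfies this and gives $v_i=(\phi_i-\bar\phi)/\beta$ with $\sum_i v_i=0$, which collapses the error equation to $\dot e=-\tau e$ and yields $e(t)\to 0$ exponentially, i.e.\ centroid tracking of $x^*(t)$. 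I expect the hardest part to be making this Filippov equivalent-control argument rigorous - in particular, verifying that the selection is feasible (which would require $\|\phi_i-\bar\phi\|_\infty\le\beta$, sharpening the hypothesis $\beta\ge\|\phi_i\|_1$) and that the sliding manifold is actually reached, especially when $\alpha=0$.
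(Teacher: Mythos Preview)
Your Stage~1 coincides with the paper's argument. For tracking, the paper takes a somewhat different route: rather than working with the centroid error $e=\bar x-x^*$ directly, it introduces the Lyapunov function $W_1=\tfrac12\bigl\|\sum_j\nabla f_j(x_j,t)\bigr\|^2$ and computes $\dot W_1=-\tau\bigl\|\sum_j\nabla f_j\bigr\|^2$, from which $\sum_j\nabla f_j\to 0$ and hence (via the affine-gradient hypothesis) $\bar x=x^*$. This has the minor advantage of not introducing $x^*$ until the last line.

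The crucial step in both approaches, however, is the same summation of the closed-loop dynamics over $i$, and here your analysis is \emph{more} careful than the paper's. The paper simply asserts $\sum_i\dot x_i=\sum_i\phi_i$, invoking only the antisymmetry $\partial V_{ij}/\partial x_i=-\partial V_{ij}/\partial x_j$. That relation indeed kills the $-\alpha s_i$ contributions, but it does \emph{not} kill the $-\beta\,\mathrm{sgn}(s_i)$ contributions: $\sum_i s_i=0$ in no way implies $\sum_i\mathrm{sgn}(s_i)=0$. The residual you carry into Stage~3 is therefore a genuine issue that the paper's own proof elides. Your proposed Filippov/equivalent-control resolution --- showing $s_i\to 0$ and then replacing $\mathrm{sgn}(s_i)$ by a selection with zero sum on the sliding manifold --- is the natural way to close this gap, and your caveats (feasibility of the selection requiring $\|\phi_i-\bar\phi\|_\infty\le\beta$, the $\alpha=0$ corner case) are exactly the points that would need work. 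In short: you have not missed an idea from the paper; you have found one the paper is missing.
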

\begin{proof}
Define the positive semi-definite Lyapanov function candidate
 \begin{equation*}
W= \frac{1}{2}\sum_{i=1}^N \sum_{j=1}^N V_{ij}.
\end{equation*}
The time derivative of $W$ is obtained as
\begin{equation*} \small 
\begin{split}
\dot{W}= & \frac{1}{2}\sum_{i=1}^N \sum_{j=1}^N \big(\frac{\partial V_{ij}}{\partial x_i} \dot{x}_i + \frac{\partial V_{ij}}{\partial x_j} \dot{x}_j \big)= \sum_{i=1}^N \sum_{j=1}^N \frac{\partial V_{ij}}{\partial x_i} \dot{x}_i
\end{split}
\end{equation*}\normalsize
where in the second equality, Lemma 3.1 in \cite{caoren12} has been used. Now, rewriting $\dot{W}$ along with the close-loop system \eqref{usingleswarm} and \eqref{single} we have 
\begin{equation*} \small 
\begin{split}
&\dot{W}=  -\alpha \sum_{i=1}^N \big( \sum_{j=1}^N \frac{\partial V_{ij}}{\partial x_i} \big)^2 - \beta \sum_{i=1}^N \sum_{j=1}^N \frac{\partial V_{ij}}{\partial x_i} \text{sgn}( \sum_{j=1}^N \frac{\partial V_{ij}}{\partial x_i}) \\
&+\sum_{i=1}^N  \sum_{j=1}^N \frac{\partial V_{ij}}{\partial x_i} \phi_i \leq  \sum_{i=1}^N \bigg(\norm{ \sum_{j=1}^N \frac{\partial V_{ij}}{\partial x_i}}_1 (\norm{\phi_i}_1 -\beta) \bigg)
\end{split}
\end{equation*}\normalsize
It is easy to see that if $\beta \geq \norm{\phi_i}_1 \ \forall i \in {\mathcal I},$ then $\dot{W}$ is negative. Therefore, having $W\geq 0$ and $\dot{W} \leq 0$, we can conclude that $V_{ij} \in \mathcal{L}_{\infty}$. Since $V_{ij}$ is bounded, based on Definition \ref{defswarm}, it is guaranteed that there will not be a inter-agent collision and the connectivity is maintained.

In what follows, we focus on finding the relation between the optimal trajectory and the agents' positions. 
Define the Lyapanov candidate function
\begin{equation*}  W_1= \frac{1}{2} ( \sum_{j=1}^N \nabla f_j(x_j,t))^T (\sum_{j=1}^N \nabla f_j(x_j,t))\end{equation*} \normalsize
where $W_1$ is positive semi-definite. The time derivative of $W_1$ can be obtained as
\small\begin{equation} 
\dot{W}_1= (\sum_{j=1}^N \nabla f_j(x_j,t))^T (\sum_{j=1}^N H_j(x_j,t) \dot{x}_j+ \sum_{j=1}^N \frac{\partial }{\partial t} \nabla f_j(x_j,t)) \nonumber
\end{equation} \normalsize
Because $\nabla f_i(x_i,t)=\sigma x_i+g_i(t), \ \forall i \in {\mathcal I}$, we know $H_i(x_i,t)=H_j(x_j,t)$ and we obtain
\small\begin{equation}  \label{local2}
\begin{split}
\dot{W_1}= (\sum_{j=1}^N \nabla &f_j(x_j,t))^T (H_i(x_i,t))\\&\big(\sum_{j=1}^N  \dot{x}_j+ H_i^{-1}(x_i,t)\sum_{j=1}^N \frac{\partial }{\partial t} \nabla f_j(x_j,t)\big). \end{split} \end{equation} \normalsize
Based on Definition \ref{defswarm}, we can obtain
\begin{equation} \label{rondVijrelation} 
\frac{\partial V_{ij}}{\partial e_{X_i}}=\frac{\partial V_{ji}}{\partial e_{X_i}}=-\frac{\partial V_{ij}}{\partial e_{X_j}}
\end{equation}\normalsize
Now, by summing both sides of the closed-loop system \eqref{single} with control algorithm \eqref{usingleswarm}, for $i=1,...,N$, we have $\sum_{j=1}^N \dot{x}_j = \sum_{j=1}^N \phi_j$. Hence we can rewrite \eqref{local2} as
\begin{equation} 
\dot{W}_1= -\tau(\sum_{j=1}^N \nabla f_j(x_j,t))^T (\sum_{j=1}^N \nabla f_j(x_j,t)) \nonumber \end{equation} \normalsize
Therefore, $\dot{W}_1<0$ for $\sum_{j=1}^N \nabla f_j(x_j) \neq 0$. This guarantees that $\sum_{j=1}^N \nabla f_j(x_j)$ will asymptomatically converge to zero. Because $\nabla f_i(x_i,t)=\sigma x_i+g_i(t)$, we have $ \sum_{j=1}^N x_i=\frac{-\sum_{j=1}^N g_j}{ \sigma}$. On the other hand, using Lemma \ref{lem-intro-gradzero}, we know $\sum_{j=1}^N \nabla f_j(x^*,t)=0$. Hence, the optimal trajectory is
\begin{equation} \label{xvoptimal}  \small 
 x^*=\frac{-\sum_{j=1}^N g_j}{N \sigma}, 
\end{equation} \normalsize  which implies that 
\begin{equation} \label{xvbound}  \small
\begin{split} 
& x^*=\frac{1}{N}\sum_{j=1}^N x_i.\\
\end{split} \end{equation} \normalsize
where we have shown that the center of the agents will track the team cost function minimizer.
\end{proof}

\begin{remark}
Connectivity maintenance guarantees that there exists a path between each two agents $i, j \in {\mathcal I}\ \forall t$. Hence, we have $\norm{x_i - \frac{1}{N} \sum_{j=1}^N x_j}  <(N-1)R$. Now, using the result in Theorem \ref{theoremswarm}, it is easy to see that \begin{equation*} \small
\norm{x^*(t) - x_i(t)}  <NR ,\forall i \in {\mathcal I}.
\end{equation*} \normalsize which guarantees a bounded error between each agent's position and the optimal trajectory.
\end{remark}
\begin{remark} \label{Remarkboundsingle}
There exists a sufficient condition on the agents' cost functions to make sure that $\norm{\phi_i}_1, \ \forall i \in {\mathcal I}$ is bounded. If both $\norm{\nabla f_i}$ and $\norm{\frac{\partial \nabla  f_i}{\partial t}} \ \forall i,j \in {\mathcal I}$ are bounded, then $\norm{\phi_i}_1 \ \forall i \in {\mathcal I}$ is bounded.
\end{remark}
\begin{remark} \label{Remarksingle2}
In Theorem \ref{theoremswarm}, it is required that each agent's cost function have a gradient in the form of $\nabla f_i(x_i,t)=\sigma x_i+g_i(t)$. While this can be a restrictive assumption, there exists an important class of cost functions that satisfy this assumption. For example, the cost functions that are commonly used for energy minimization, e.g., $f_i(x_i,t)=(ax_i+g_i(t))^{2n},$ where $a \in R, n \in \{1,2,...\}$ are constants and $g_i(t)$ is a time-varying function particularly for agent $i$. Here, to satisfy the condition $\beta >\norm{\phi_i}_1 \ \forall i \in {\mathcal I}$, as discussed in Remark \ref{Remarkboundsingle} it is sufficient to have a bound on $\norm{g_i(t)}$ and $\norm{\dot{g}_i(t)}$. 
\end{remark}
\section{Time-Varying Convex Optimization with Swarm Tracking Behavior For Double-Integrator Dynamics}  \label{sec:double}
In this section, we study distributed convex optimization of time-varying cost functions with swarm behavior for double-integrator dynamics. Suppose that the agents satisfy the continuous-time double-integrator dynamics
\begin{equation} \label{double}
 \begin{cases} \dot{x}_i(t) = v_i(t) \\ 
\dot{v}_i(t) = u_i(t) \end{cases}
\end{equation}
where $x_i, v_i \in R^{m}$ are, respectively, the position and velocity of agent $i$, and $u_i \in R^{m}$ is the control input of agent $i$. 

%
We will propose an algorithm, where each agent has access to only its own position and the relative positions and velocities between itself and its neighbors. We propose the algorithm 
\begin{equation} \label{uswarm}
\begin{split}
u_i(t) =& - \sum_{j \in N_i} \frac{\partial V_{ij}}{\partial x_i}- \alpha\sum_{j \in N_i}(v_i -v_j)\\& -\beta  \sum_{j \in N_i} \text{sgn}(v_i -v_j)+\phi_i, 
\end{split}
\end{equation}
where 
\begin{equation*} 
\begin{split} \small
\phi_i=-H_i^{-1} (x_i,t)\big( \frac{\partial}{\partial t}\frac{d \nabla f_i(x_i,t)}{dt}+\frac{d \nabla f_i(x_i,t)}{dt} \big) \ \ \ &\\
- H_i \nabla f_i(x_i,t)+ \bigg(H_i^{-1}(x_i,t) (\frac{d}{dt} H_i(x_i,t)) H^{-1}_i(x_i,t)&\bigg) \\
\big( \frac{\partial \nabla f_i (x_i,t)}{\partial t}+\nabla f_i(& x_i,t) \big). 
\end{split}
\end{equation*}
The potential function $V_{ij}$ is introduced in Definition \ref{defswarm}, $\alpha$ and $\beta$ are positive coefficients.

\begin{theorem} \label{theoremswarmdouble}
Suppose that graph ${\mathcal G}(0)$ is connected, Assumption \ref{as1} holds for each agent's cost function $f_i(x_i(t),t),$ and the gradient of the cost functions can be written as $\nabla f_i(x_i,t)=\sigma x_i+g_i(t), \ \forall i \in {\mathcal I}$. If $\beta \geq \frac{ \norm{(\Pi \otimes I_m)\Phi}_2}{\sqrt{\lambda_{2}(L)}}$, for system (\ref{double}) with algorithm \eqref{uswarm}, the center of the agents tracks the optimal trajectory and the agents' velocities track the optimal velocity while maintaining connectivity and avoiding inter-agent collision.  
\end{theorem}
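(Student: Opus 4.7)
The plan is to extend the two-step Lyapunov analysis of Theorem \ref{theoremswarm} to the second-order setting via a nested argument: an outer layer that establishes connectivity, collision avoidance, and velocity consensus, and an inner layer that establishes optimal tracking of the center. Throughout I will assume the reading $\Pi = I_N - \frac{1}{N}\mathbf{1}_N \mathbf{1}_N^T$ for the agreement-orthogonal projector and $\Phi = [\phi_1^T,\ldots,\phi_N^T]^T$ for the stacked feedforward.

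For the outer layer I would introduce an energy-like Lyapunov candidate that combines the inter-agent potential with a kinetic term, for instance
\begin{equation*}
W = \frac{1}{2}\sum_{i=1}^N \sum_{j=1}^N V_{ij} + \frac{1}{2}\sum_{i=1}^N v_i^T v_i.
\end{equation*}
Differentiating along \eqref{double}--\eqref{uswarm} and using the symmetry identity \eqref{rondVijrelation} to collapse $\sum_j (\partial V_{ij}/\partial x_i)^T v_i$ into a pair-symmetric expression, the potential-gradient contributions cancel against the first term of $u_i$. What remains is a negative-semidefinite Laplacian dissipation $-\alpha v^T (L \otimes I_m) v$, a non-positive signum dissipation obtained via the identity $\sum_{i,j} v_i^T \text{sgn}(v_i - v_j) = \tfrac{1}{2}\sum_{i,j}\norm{v_i - v_j}_1$, and a residual coupling $v^T \Phi$. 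The component of $v$ along $\mathbf{1}_N \otimes I_m$ is killed both by $L$ and by the skew-symmetric signum structure, so only the disagreement component $(\Pi \otimes I_m) v$ interacts with $\Phi$; applying Lemma \ref{eigvalue} to bound $v^T(L \otimes I_m) v \geq \lambda_2(L)\norm{(\Pi \otimes I_m) v}_2^2$ and matching it against $(\Pi \otimes I_m)\Phi$ shows that the hypothesis $\beta \geq \norm{(\Pi\otimes I_m)\Phi}_2/\sqrt{\lambda_2(L)}$ is exactly what is needed to dominate the residual and force $\dot W \leq 0$. Boundedness of $W$ then forces each $V_{ij}$ to be bounded, which by Definition \ref{defswarm} precludes inter-agent collisions and edge loss, keeping $\Gcal(t)$ connected for all $t$. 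A LaSalle or Barbalat argument applied to the dissipation yields velocity consensus, $(v_i - v_j)(t) \to 0$ for all $i,j$.

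For the inner layer I mimic the single-integrator strategy. Summing the closed-loop equations over $i$, the $V_{ij}$ terms cancel by \eqref{rondVijrelation}, the Laplacian terms cancel via $\mathbf{1}^T L = 0$, and the pairwise signum terms cancel by skew-symmetry, leaving $\sum_i \dot v_i = \sum_i \phi_i$. Defining $y(t) = \sum_j \nabla f_j(x_j,t)$ and exploiting $\nabla f_i = \sigma x_i + g_i(t)$ so that $H_i = \sigma I_m$ is constant, I would compute $\dot y$ and $\ddot y$ along trajectories; the compound expression specified for $\phi_i$ is precisely the feedforward that converts the closed-loop equation for $y$ into an asymptotically stable linear ODE of the form $\ddot y + \dot y + y = 0$, so $y \to 0$. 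By Lemma \ref{lem-intro-gradzero} this is equivalent to $\frac{1}{N}\sum_j x_j(t) \to x^*(t)$; one further differentiation combined with the velocity consensus from the outer layer yields $v_i \to \dot x^*(t)$, as claimed.

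The main obstacle is the outer layer: orchestrating the Lyapunov candidate so that the potential-force, Laplacian-consensus, signum-dissipation and $\phi_i$-feedforward cross-terms combine cleanly, and confirming that the stated lower bound on $\beta$ is tight; in particular the non-smooth $\text{sgn}$ term will require Filippov-style handling and a careful separation of the mean and disagreement components of $v$. A secondary difficulty lies in the inner layer, where $\phi_i$ mixes partial and total time derivatives of $\nabla f_i$ and the verification that these cancel algebraically into a Hurwitz ODE for $y$ is calculation-heavy, though it follows the same template as the single-integrator case.
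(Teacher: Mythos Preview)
Your outer-layer Lyapunov candidate has a real gap. With $W=\tfrac{1}{2}\sum_{i,j}V_{ij}+\tfrac{1}{2}V^TV$ using the full velocity $V$, differentiation yields
\[
\dot W=-\alpha V^T(L\otimes I_m)V-\beta\|(D^T\otimes I_m)V\|_1+V^T\Phi.
\]
You are right that the Laplacian and signum terms see only the disagreement part $(\Pi\otimes I_m)V$; but the conclusion ``only the disagreement component of $V$ interacts with $\Phi$'' does not follow. Decomposing $V=\bar V+V_\perp$ and $\Phi=\bar\Phi+\Phi_\perp$ into mean and disagreement parts gives $V^T\Phi=V_\perp^T\Phi_\perp+\bar V^T\bar\Phi$. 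The hypothesis $\beta\sqrt{\lambda_2(L)}\ge\|(\Pi\otimes I_m)\Phi\|_2$ lets the signum dissipation absorb $V_\perp^T\Phi_\perp$, but the mean--mean piece $\bar V^T\bar\Phi=\tfrac{1}{N}(\mathbf 1_N^T\otimes I_m)V\cdot(\mathbf 1_N^T\otimes I_m)\Phi$ is untouched by any negative term, so $\dot W\le 0$ cannot be concluded. The paper avoids this precisely by building the kinetic term from the \emph{disagreement} velocity $e_V=(\Pi\otimes I_m)V$, i.e.\ $W=\tfrac{1}{2}\sum_{i,j}V_{ij}+\tfrac{1}{2}e_V^Te_V$, and rewriting the closed loop in the error coordinates $(e_X,e_V)$; then only $(\Pi\otimes I_m)\Phi$ ever appears and the stated bound on $\beta$ is exactly sufficient.

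Your inner-layer plan is essentially sound and is a legitimate alternative to the paper's argument. The paper does not derive a single ODE for $y=\sum_j\nabla f_j$; instead it introduces the auxiliary quantity $S_i=H_i^{-1}(\partial_t\nabla f_i+\nabla f_i)$ and the two-block Lyapunov function
\[
W_1=\tfrac{1}{2}\Big\|\sum_j\nabla f_j\Big\|^2+\tfrac{1}{2}\Big\|\sum_j v_j-\sum_j S_j\Big\|^2,
\]
and shows $\dot W_1=-\|\sum_j\nabla f_j\|^2$, then invokes Barbalat. Your route of summing the closed loop to get $\sum_j\dot v_j=\sum_j\phi_j$ and reducing to a linear constant-coefficient ODE in $y$ is cleaner under the standing assumption $H_i=\sigma I_m$; just note that the resulting equation is $\ddot y+\dot y+\sigma^2 y=0$ (not $\ddot y+\dot y+y=0$), which is still Hurwitz.
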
 \vspace*{0.1cm}

\begin{proof}
The closed-loop system \eqref{double} with control input \eqref{uswarm} can be recast into a compact form as
\begin{equation} \label{closeloopswarm}
\begin{cases} \dot{X} =& V \\ 
\dot{V} =&-\alpha(L\otimes I_m) V-\beta D \text{sgn}(D^T V)
\\& \begin{pmatrix}
  \sum_{j \in N_1} \frac{\partial V_{1j}}{\partial x_1} \\
    \vdots \\
   \sum_{j \in N_N} \frac{\partial V_{Nj}}{\partial x_N}
 \end{pmatrix} +(\Pi\otimes I_m) \Phi
\end{cases} 
\end{equation}
where $X=[x_1^T,x_2^T,...,x_N^T]^T,$ and $V=[v_1^T,v_2^T,...,v_N^T]^T$ are, respectively, positions and velocities of $N$ agents and $\Phi=[\phi_1^T,\phi_2^T,...,\phi_N^T]^T$. It is preferred to rewrite Eq. \eqref{closeloopswarm} in terms of the consensus error. Therefore, we define $e_X(t)=(\Pi \otimes I_m)X$ and $e_V(t)=(\Pi \otimes I_m)V$, where $\Pi=I_N- \frac{1}{N} \mathbf{1}_N \mathbf{1}_N^T$. Note that $\Pi$ has one simple zero eigenvalue with $\mathbf{1}_N$ as its right eigenvector and has $1$ as its other eigenvalue with the multiplicity $N-1$. Then it is easy to see that $e_V(t)=0$ if and only if $v_i=v_j \ \forall i,j \in {\mathcal I}$. Thus the agents' velocities reach consensus if and only if $e_V(t)$ converge to zero asymptotically. Rewriting \eqref{closeloopswarm} we have
\begin{equation} \label{errorsysswarm}
\begin{cases} \dot{e}_X =& e_V \\ 
\dot{e}_V =&-\alpha(L\otimes I_m) e_V-\beta D \text{sgn}(D^T e_V)
\\& \begin{pmatrix}
  \sum_{j \in N_1} \frac{\partial V_{1j}}{\partial e_{X_1}} \\
    \vdots \\
   \sum_{j \in N_N} \frac{\partial V_{Nj}}{\partial e_{X_N}}
 \end{pmatrix} +(\Pi\otimes I_m) \Phi,
\end{cases} 
\end{equation}
where we recall the elements of $e_X$ and $e_V$ as $e_X=[e_{X_1}^T, e_{X_2}^T,..., e_{X_N}^T]^T$ and $e_V=[e_{V_1}^T, e_{V_2}^T,..., e_{V_N}^T]^T$. Define the positive definite Lyapanov function candidate
 \begin{equation*}
W= \frac{1}{N}\sum_{i=1}^N \sum_{j=1}^N V_{ij} +\frac{1}{2} e_V^T e_V.
\end{equation*}
The time derivative of $W$ along \eqref{errorsysswarm} can be obtained as
\begin{equation} \label{dwswarmdbl}
\begin{split}
\dot{W}= & \frac{1}{2}\sum_{i=1}^N \sum_{j=1}^N \big(\frac{\partial V_{ij}}{\partial e_{X_i}}^T e_{V_i} + \frac{\partial V_{ij}}{\partial e_{X_j}}^T e_{V_j} \big)+ e_V^T \dot{e}_V\\
 =& \sum_{i=1}^N \sum_{j=1}^N \frac{\partial V_{ij}}{\partial e_{X_i}}^T e_{V_i}+ e_V^T \dot{e}_V \\
 =& -\alpha e_V^T (L\otimes I_m) e_V+ e_V^T (\Pi \otimes I_m)\Phi \\
 &-\beta e_V^T (D \otimes I_m)\text{sgn}\big((D^T \otimes I_m)e_V \big),
\end{split}
\end{equation}\normalsize
where in the first equality, Lemma 3.1 in \cite{caoren12} has been used. We also have
\begin{equation} \label{sgnnorm}
\begin{split} 
&e_V^T (\Pi \otimes I_m)\Phi-\beta e_V^T (D \otimes I_m)\text{sgn}\big((D^T \otimes I_m)e_V \big)\\
&\leq \norm{e_V}_2 \norm{(\Pi \otimes I_m)\Phi}_2- \beta \norm{(D^T \otimes I_m)e_V}_1\\
&\leq \norm{e_V}_2 \norm{(\Pi \otimes I_m)\Phi}_2- \beta \sqrt{e_V^T (DD^T \otimes I_m)e_V}\\
&\leq \norm{e_V}_2 \norm{(\Pi \otimes I_m)\Phi}_2- \beta \sqrt{\lambda_{2}(L)} \norm{e_V}_2,
\end{split}
\end{equation}
where in the last inequality Lemma \ref{eigvalue} has been used. Now, it is easy to see that if $\beta \sqrt{\lambda_{2}(L)} \geq \norm{(\Pi \otimes I_m)\Phi}_2,$ then $\dot{W}$ is negative semi-definite. Therefore, having $W\geq 0$ and $\dot{W} \leq 0$, we can conclude that $V_{ij}, e_v \in \mathcal{L}_{\infty}$. By integrating both sides of \eqref{dwswarmdbl}, we can see that  $e_v \in \mathcal{L}_{2}$. Now, applying Barbalat's Lemma\cite{sheida22j}, we obtain that $e_V$ asymptotically converges to zero, which means that the agents' velocities reach consensus as $t\rightarrow \infty$. On the other hand, since $V_{ij}$ is bounded, based on Definition \ref{defswarm}, it is guaranteed that there will not be inter-agent collision and the connectivity is maintained. 

In what follows, we focus on finding the relation between the optimal trajectory of the team cost function and the agents' states. Define the Lyapanov candidate function
\begin{equation}
\begin{split}
W_1&=\frac{1}{2}(\sum_{j=1}^N \nabla f_j(x_j,t))^T (\sum_{j=1}^N \nabla f_j(x_j,t))\\&+\frac{1}{2} (\sum_{j=1}^N v_i -\sum_{j=1}^N S_i )^T (\sum_{j=1}^N v_i -\sum_{j=1}^N S_i ),
\end{split}
\end{equation}
where $ S_i={H_i}^{-1}(x_i,t) \big( \frac{\partial }{\partial t} \nabla f_i(x_i,t) + \nabla f_i(x_i,t)\big)$.
The time derivative of $W_1$ along the system defined in \eqref{double} and \eqref{uswarm} can be obtained as
\begin{equation*} \small
\begin{split}
\dot{W}_1=& (\sum_{j=1}^N \nabla f_j(x_j,t))^T (\sum_{j=1}^N H_j(x_j,t) v_j+ \sum_{j=1}^N \frac{\partial }{\partial t} \nabla f_j(x_j,t))\\&+ (\sum_{j=1}^N v_j -\sum_{j=1}^N S_j )^T (\sum_{j=1}^N \dot{v}_j -\sum_{j=1}^N \dot{S}_j )\\
=& (\sum_{j=1}^N \nabla f_j(x_j,t))^T (\sum_{j=1}^N H_j(x_j,t) v_j+ \sum_{j=1}^N \frac{\partial }{\partial t} \nabla f_j(x_j,t))\\&+ (\sum_{j=1}^N v_j -\sum_{j=1}^N S_j )^T (\sum_{j=1}^N \phi_j -\sum_{j=1}^N \dot{S}_j ) \\
=& (\sum_{j=1}^N \nabla f_j(x_j,t))^T (\sum_{j=1}^N H_j(x_j,t) v_j+ \sum_{j=1}^N \frac{\partial }{\partial t} \nabla f_j(x_j,t))\\&-(\sum_{j=1}^N v_j -\sum_{j=1}^N S_j )^T (\sum_{j=1}^N H_j(x_j,t)\nabla f_j(x_j,t)),\nonumber
\end{split}
\end{equation*} \normalsize
where in the second equality, we have used the fact that by summing both sides of the closed-loop system \eqref{double} with controller \eqref{uswarm} for $j=1,2,...,N$, and using \eqref{rondVijrelation}, we obtain that $\sum_{j=1}^N \dot{v}_j = \sum_{j=1}^N \phi_j$. Because $\nabla f_i(x_i,t)=\sigma x_i+g_i(t), \ \forall i \in {\mathcal I}$, we know $H_i(x_i,t)=H_j(x_j,t)$. Hence, we have
\begin{equation} \small
\begin{split}
\dot{W}_1=& - (\sum_{j=1}^N v_j -\sum_{j=1}^N S_j )^T (H_i(x_i,t)\sum_{j=1}^N \nabla f_j(x_j,t))\\
+(\sum_{j=1}^N& \nabla f_j(x_j,t))^T \bigg(H_i(x_i,t)\sum_{j=1}^N  v_j+ \sum_{j=1}^N \frac{\partial }{\partial t} \nabla f_j(x_j,t)\bigg)\\
=&-(\sum_{j=1}^N \nabla f_j(x_j,t))^T (\sum_{j=1}^N \nabla f_j(x_j,t)).
\end{split}
\end{equation} \normalsize
Therefore, $\dot{W}_1<0$ for $\sum_{j=1}^N \nabla f_j(x_j,t) \neq 0$. Having $W_1\geq 0$ and $\dot{W}_1 \leq 0$, we can conclude that $\sum_{j=1}^N \nabla f_j(x_j,t), \big(\sum_{j=1}^N v_i -\sum_{j=1}^N S_i\big) \in \mathcal{L}_{\infty}$. By integrating both sides of $\dot{W}_1=-(\sum_{j=1}^N \nabla f_j(x_j,t))^T (\sum_{j=1}^N \nabla f_j(x_j,t))$, we can see that  $\sum_{j=1}^N \nabla f_j(x_j,t) \in \mathcal{L}_{2}$. Now, applying Barbalat's Lemma, we obtain that $\sum_{j=1}^N \nabla f_j(x_j,t)$ will asymptomatically converge to zero. Now, under the assumption that $\nabla f_i(x_i,t)=\sigma x_i+g_i(t)$, we have $ \sum_{j=1}^N x_i=\frac{-\sum_{j=1}^N g_j}{ \sigma}$ and $\sum_{j=1}^N v_i=\frac{-\sum_{j=1}^N \dot{g}_j}{\sigma}$. 

On the other hand, using Lemma \ref{lem-intro-gradzero}, we know $\sum_{j=1}^N \nabla f_j(x^*,t)=0$. Hence, the optimal trajectory is
\begin{equation} \label{xvoptimal}  \small
\begin{split} 
& x^*=\frac{-\sum_{j=1}^N g_j}{N \sigma}, \ \ v^*=\frac{-\sum_{j=1}^N \dot{g}_j}{N \sigma}\\
\end{split} \end{equation} \normalsize
Now using \eqref{xvoptimal}, we can conclude that
\begin{equation} \label{xvbound}  \small
\begin{split} 
& x^*=\frac{1}{N}\sum_{j=1}^N x_i, \ \ v^*=\frac{1}{N}\sum_{j=1}^N v_i\\
\end{split} \end{equation} \normalsize
Particularly, we have shown that the average of agents' state, positions and velocities, tracks the optimal trajectory. We also have shown that the agents' velocities reach consensus as $t \to \infty$. Thus we have $v_i$ approaches $v^*$ as $t \to \infty$. This completes the proof.
\end{proof}
\begin{remark}
The assumption $\beta  \geq \frac{\norm{(\Pi \otimes I_m)\Phi}_2}{\sqrt{\lambda_{2}(L)}},$ in Theorem \ref{theoremswarmdouble}, can be interpreted as a bound on the difference between the agents' internal signals. This condition is weaker than putting an upper bound on $\phi_i$. If $\norm{\nabla f_j - \nabla f_i}$, $\norm{\frac{d \nabla  f_j}{dt}-\frac{d\nabla  f_i}{dt}} $ and $\norm{\frac{\partial^2 \nabla  f_j}{\partial t^2}-\frac{\partial^2 \nabla  f_i}{\partial t^2}} \ \forall i,j \in {\mathcal I}$ are bounded, then $\norm{(\Pi \otimes I_m) \Phi}_2$ is bounded. For example, the cost function $f_i(x_i(t),t)=(ax_i(t)+g_i(t))^{2n}$ introduced in Remark \ref{Remarksingle2} will satisfy these conditions if $\norm{g_i(t) -g_j(t)}, \norm{\dot{g}_i(t) -\dot{g}_j(t)}$ and $\norm{\ddot{g}_i(t) -\ddot{g}_j(t)}$ are bounded.
\end{remark}
 

\section{Simulation and Discussion } \label{sec:sim}
In this section, we present two simulations to illustrate the theoretical results in previous sections. Consider a team of six agents. We assumed that $R=5$, which means that two agents are neighbors if their this distance is less than $R$. The agents' goal is to have their center minimize the team cost function $\sum_{i=1}^6 f_i (x_i(t),t)$ where $x_i(t)=({r_{x_i}} (t),{r_{y_i}}(t))^T$ is the coordinate of agent $i$ in $2D$ plane. 

In our first example, we apply algorithm \eqref{usingleswarm} for single-integrator dynamics \eqref{single}. The local cost function for agent $i$ is chosen as 
\begin{equation} \label{costfunc1}
f_i(x_i(t),t)= (r_{x_i}(t) - i \text{sin}(0.2t))^2+ (r_{y_i}(t) - i \text{cos}(0.2t))^2,
\end{equation}
For local cost functions \eqref{costfunc1}, Assumption \ref{as1} and the conditions for agents' cost function in Remark \ref{Remarkboundsingle} hold and the gradient of the cost functions can be rewritten as $\nabla f_i(x_i,t)=\sigma x_i+g_i(t)$. To guarantee the collision avoidance and connectivity maintenance, the potential function partial derivatives is chosen as Eqs. (36) and (37) in \cite{caoren12}, where $d_{ij}=0.5 \ \forall i,j$.  Choosing the coefficients in algorithm \eqref{usingleswarm} as $\alpha=2, \beta=5$, and $\tau=1$, and the results are shown in Fig. \ref{sgldisfig}. 

\begin{figure}[t]
\begin{center} \hspace*{-0.6cm}
\vspace{0.1cm} {\scalebox{0.2700}{\includegraphics*{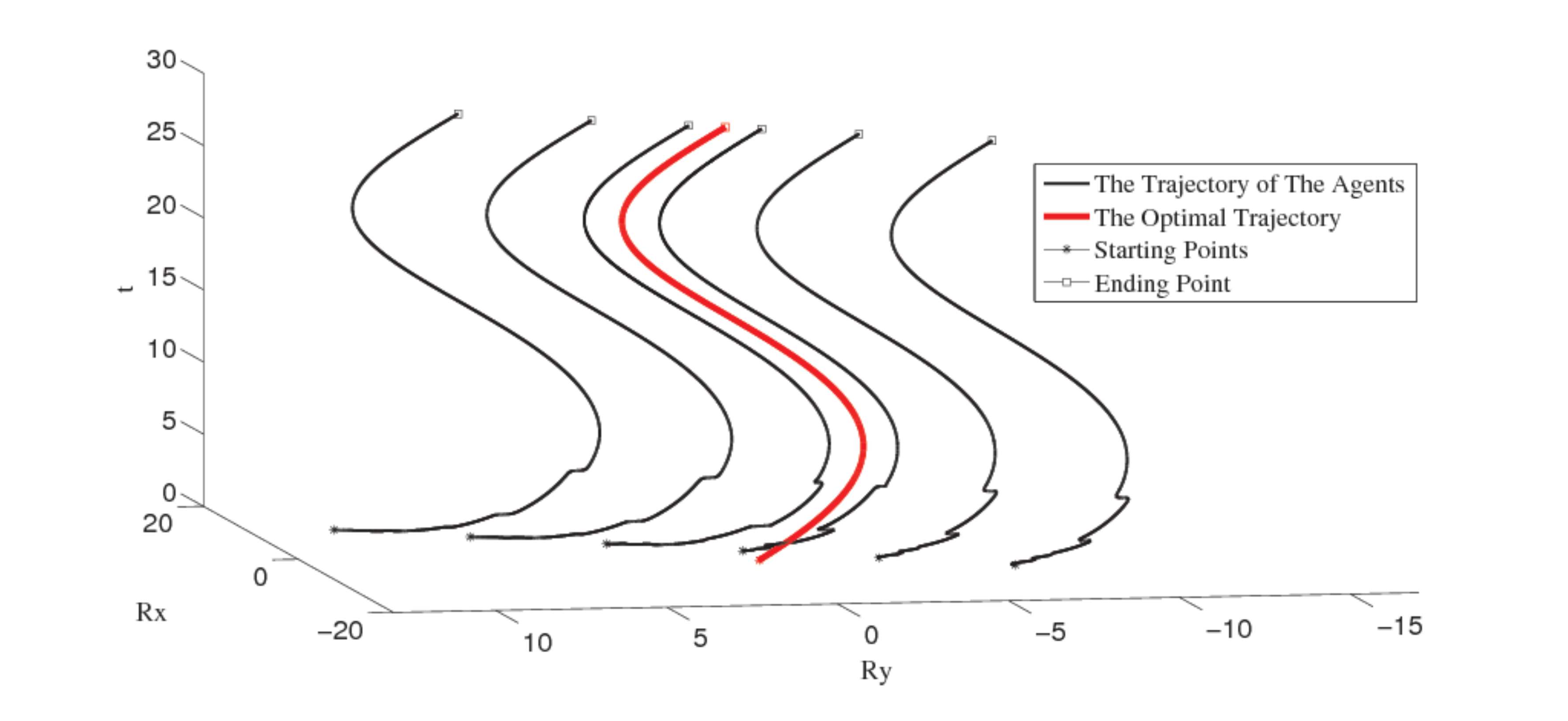}}}
\vspace{-.6cm} \caption{Trajectories of all agents along with optimal trajectory using algorithm \eqref{usingleswarm} for local cost function \eqref{costfunc1}} \vspace{-0.1cm}
\label{sgldisfig}
\end{center}
\end{figure}

In our next illustration, the swarm control algorithm \eqref{uswarm} is employed for double-integrator dynamic system \eqref{double} to minimize the team cost function where the local cost functions are defined as
\begin{equation} \label{costfunc3} \small
f_i(x_i(t),t)= (r_{x_i}(t) +2i \frac{\text{sin}(0.5t)}{t+1})^2+ (r_{y_i}(t) +i\text{sin}(0.1t))^2,
\end{equation} \normalsize
In this case, the parameters of control algorithm \eqref{uswarm} are chosen as $\alpha=10$ and $\beta=20$. Fig. \ref{dblswarmfig} shows that the center of the agents' positions tracks the optimal trajectory while the agents remain connected and avoid collisions.

\begin{figure}[t] 
\begin{center} \hspace*{-.7cm}
\vspace{0.1cm} {\scalebox{0.30}{\includegraphics*{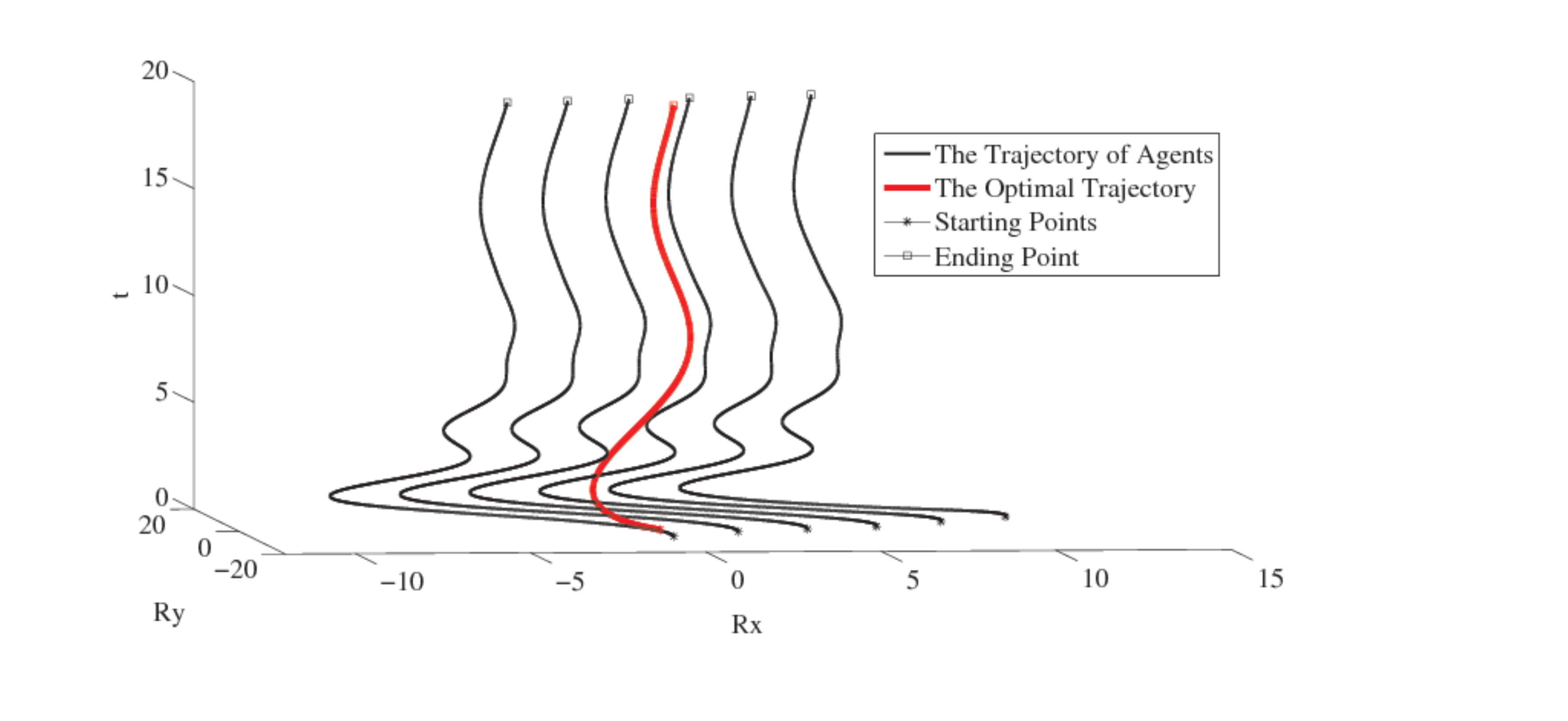}}}
\vspace{-0.8cm} \caption{Trajectories of all agents using algorithm \eqref{uswarm} for local cost function \eqref{costfunc3}} \vspace{-0.3cm}
\label{dblswarmfig}\end{center}
\end{figure}

\section{Conclusions} \label{sec:conclusions}
In this paper, a distributed convex optimization problem with swarm tracking behavior was studied for continuous-time multi-agent systems. The agents' task is to drive their center to minimize the sum of local time-varying cost functions through local interaction, while maintaining connectivity and avoiding inter-agent collision. Each local cost function is known to only an individual agent. Two cases were considered, single-integrator dynamics and double-integrator dynamics. In each case, a distributed algorithm was proposed where each agent relies only on its own position and the relative positions (and velocities in double-integrator case) between itself and its neighbors. Using these algorithms, it was proved that for both cases the center of agents tracks the optimal trajectory while the connectivity of the agents was maintained and agents avoided inter-agent collision.

\bibliographystyle{IEEEtran}
\bibliography{IEEEabrv,IEEE_TSG_00018_2010}

\end{document}